
\documentclass{amsart}
%%%%%%%%%%%%%%%%%%%%%%%%%%%%%%%%%%%%%%%%%%%%%%%%%%%%%%%%%%%%%%%%%%%%%%%%%%%%%%%%%%%%%%%%%%%%%%%%%%%%%%%%%%%%%%%%%%%%%%%%%%%%%%%%%%%%%%%%%%%%%%%%%%%%%%%%%%%%%%%%%%%%%%%%%%%%%%%%%%%%%%%%%%%%%%%%%%%%%%%%%%%%%%%%%%%%%%%%%%%%%%%%%%%%%%%%%%%%%%%%%%%%%%%%%%%%
\usepackage{amssymb}
\usepackage{amsmath}
\usepackage{graphicx}
\usepackage{amsfonts}
\usepackage{color}
\usepackage{graphicx}

\setcounter{MaxMatrixCols}{10}
%TCIDATA{OutputFilter=Latex.dll}
%TCIDATA{Version=5.50.0.2960}
%TCIDATA{<META NAME="SaveForMode" CONTENT="1">}
%TCIDATA{BibliographyScheme=Manual}
%TCIDATA{LastRevised=Tuesday, September 16, 2014 16:17:29}
%TCIDATA{<META NAME="GraphicsSave" CONTENT="32">}
%TCIDATA{Language=American English}

\providecommand{\U}[1]{\protect\rule{.1in}{.1in}}
\newtheorem{theorem}{Theorem}
\theoremstyle{plain}
\newtheorem{algorithm}{Algorithm}

\newtheorem{example}{Example}
\newtheorem{lemma}{Lemma}
\newtheorem{proposition}{Proposition}
\numberwithin{equation}{section}

\input{tcilatex}

\begin{document}
\title{Constructing the fixed point of a non-isometric plane similarity }
\author{Zhigang Han}
\address{Department of Mathematics\\
Millersville University of Pennsylvania\\
Millersville, PA. 17551}
\email{zhigang.han@millersville.edu}
\author{Ronald Umble}
\address{Department of Mathematics\\
Millersville University of Pennsylvania\\
Millersville, PA. 17551}
\email{ron.umble@millersville.edu}
\date{September 16, 2014}

\begin{abstract}
We present an algorithm for constructing the fixed point of a general
non-isometric similarity of the plane.
\end{abstract}

\maketitle

\section{Introduction}

A majority of states across the USA are in the process of implementing the
\emph{Common Core State Standards for Mathematics} \cite{CCSSI}. An
essential component of the geometry standard is the study of plane
similarities. To quote the Standards, \textquotedblleft \emph{Given two
similar two-dimensional figures, describe a sequence that exhibits the
similarity between them}.\textquotedblright\ [CCSS.MATH.CONTENT.8.G.A.4] If
the given figures are congruent, the similarity $\alpha $ relating them is a
distance-preserving transformation, called an \emph{isometry, }and by the
Classification Theorem for Plane Isometries, $\alpha $ is a translation, a
rotation, a reflection, or a glide reflection \cite{Martin}. On the other
hand, if the given figures are not congruent, $\alpha $ is a non-isometric
similarity, and by the Classification Theorem for Plane Similarities, $%
\alpha $ is a stretch, a stretch rotation, or a stretch reflection \cite%
{Martin}. Nevertheless, in either case, it is easy to determine the \emph{%
similarity type} of $\alpha $ at a glance. For a discussion of the \emph{%
Similarity Recognition Problem}, see \cite{Umble-Han}.

The precise similarity relating two similar non-congruent plane figures can
be difficult to determine, however. While the fact that \emph{every
non-isometric similarity has a fixed point} is an important clue (see
Theorem \ref{thm4.7} below), the proof only establishes existence, and it is
not immediately clear how to go about constructing the fixed point. And this
is essential -- once the fixed point is determined, the additional
requirements of the definition follow immediately. In this article we
present a straight forward algorithm for constructing the fixed point of a
non-isometric similarity.\medskip

\section{Notation and Definitions}

Let $r\in \mathbb{R}$ and let $P$ and $Q$ be arbitrarily chosen points. A
(plane) \textbf{similarity of ratio} $r>0$ is a transformation $\alpha :%
\mathbb{R}^{2}\rightarrow \mathbb{R}^{2}$ such that if $P^{\prime }=\alpha
(P)$ and $Q^{\prime }=\alpha (Q),$ then $P^{\prime }Q^{\prime }=rPQ.$ An
\textbf{isometry }is a similarity of ratio $1.$ One can show that
similarities are bijective transformations that preserve betweeness; for a
proof see \cite{Umble-Han}.

Let $C$ be a point. The \textbf{stretch} \textbf{about} $C$\textbf{\ of ratio%
} $r>0$ is the transformation $\xi _{C,r}:\mathbb{R}^{2}\rightarrow \mathbb{R%
}^{2}$ with the following properties:\ (1) $\xi _{C,r}\left( C\right) =C;$
(2) \emph{if }$P\neq C,$ \emph{then }$P^{\prime }=\xi _{C,r}\left( P\right)
\ $\emph{is the unique point on}\textit{\ }$\overrightarrow{CP}$\textit{\ }%
\emph{such that}\textit{\ }$CP^{\prime }=rCP.$ The point $C$ is called the
\textbf{center }of the stretch \textup{(see Figure 1), and i}t is easy to
check that $\xi _{C,r}$ is a similarity of ratio $r$.

\medskip

\begin{center}
\includegraphics[
height=1.4996in,
width=2.8911in
]
{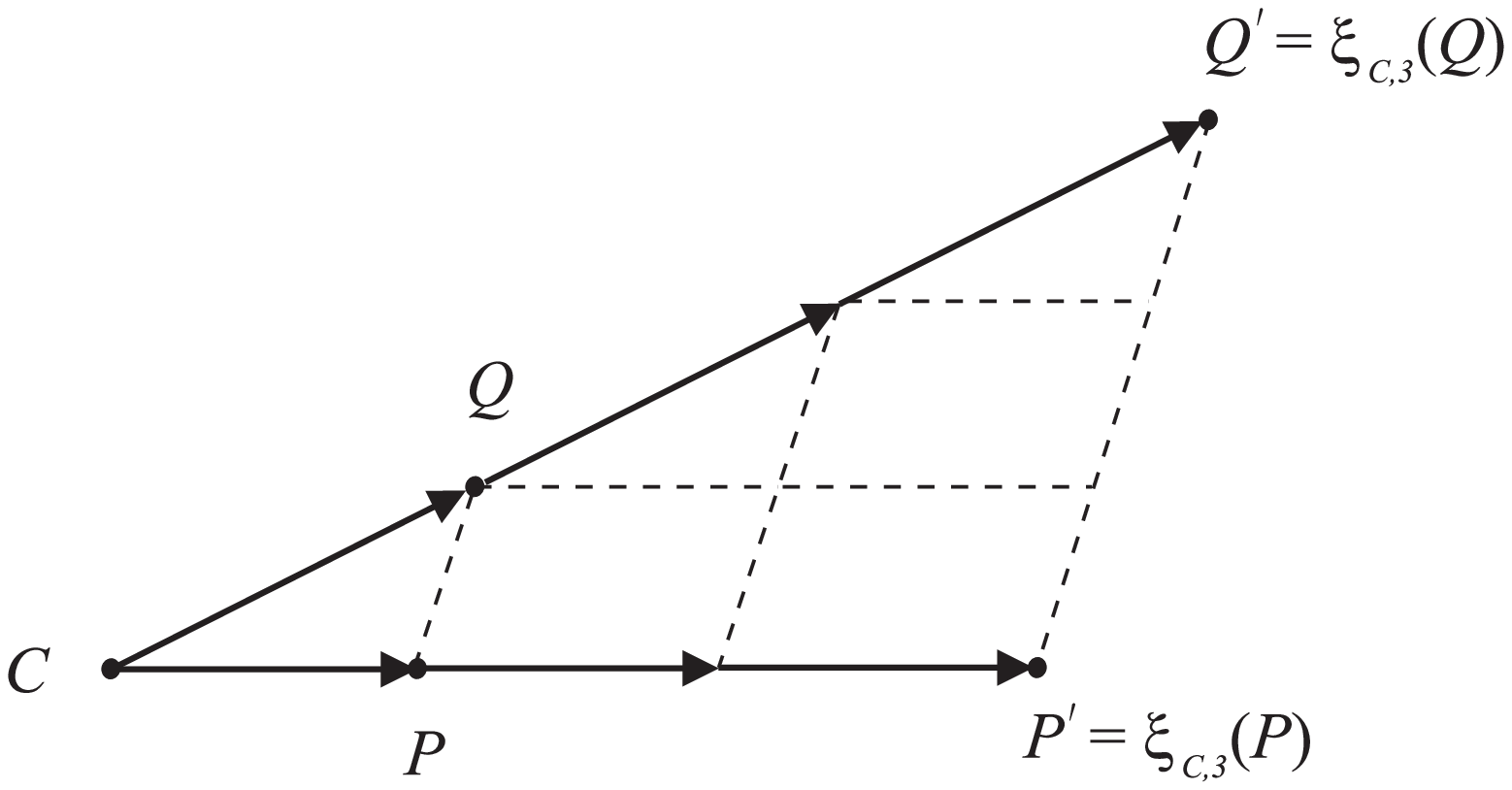}
\\\bigskip
\upshape{Figure 1. The stretch about $C$
of ratio $3$.}
\end{center}
\medskip

Let $\theta \in \mathbb{R}$. The \textbf{rotation} \textbf{about} $C$\textbf{%
\ through angle }$\theta $ is the transformation $\rho _{C,\theta }:\mathbb{R%
}^{2}\rightarrow \mathbb{R}^{2}$ with the following properties: (1) $\rho
_{C,\theta }\left( C\right) =C;$ (2)\ \emph{if} $P\neq C$ \textit{and} $%
P^{\prime }=\rho _{C,\theta }\left( P\right) ,$ \emph{then }$CP^{\prime }=CP$
\emph{and} \emph{the angle from} $\overrightarrow{CP}$ \emph{to} $%
\overrightarrow{CP^{\prime }}$ \emph{is }$\theta $. The point $C$ is called
the \textbf{center }of rotation \textup{(see Figure 2)}.

\medskip

\begin{center}
\includegraphics[
height=1.2237in,
width=1.6233in
]
{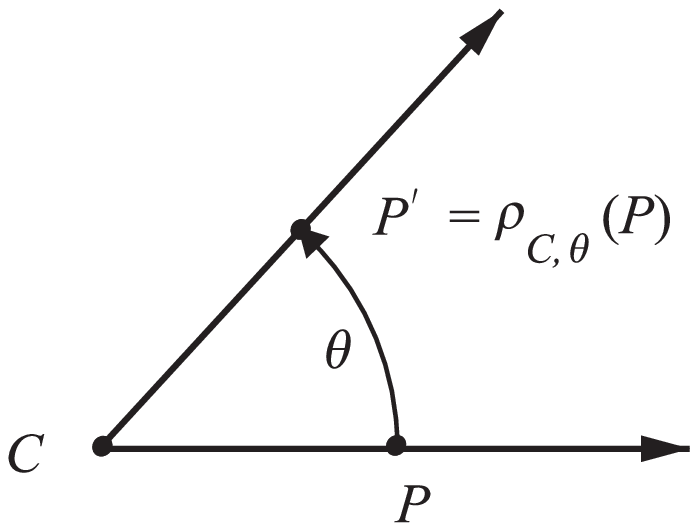}
\\\bigskip
\upshape{Figure 2. The rotation about $C$
through angle $\protect\theta .$}
\end{center}
\medskip

The \textbf{halfturn about }$C$ is the transformation $\varphi _{C}:\mathbb{R%
}^{2}\rightarrow \mathbb{R}^{2}$ defined by $\varphi _{C}\left( P\right)
=\rho _{C,180^{\circ }}\left( P\right) .$ A \textbf{dilation }about $C$ of
ratio $r>0$ is a transformation $\delta _{C,r}:\mathbb{R}^{2}\rightarrow
\mathbb{R}^{2}$ that is either a stretch or a stretch followed by a
halfturn, i.e., $\delta _{C,r}=\xi _{C,r}$ or $\delta _{C,r}=\varphi
_{C}\circ \xi _{C,r}.$ The point $C$ is the \textbf{center }of dilation, and
a dilation of ratio $1$ is either the identity $\iota $ or a halfturn.
Furthermore, $\delta _{C,r}$ is a similarity of ratio $r$ since the
composition of a similarity of ratio $r$ with a similarity of ratio $1$ is a
similarity of ratio $r.$ When $C$ is \textbf{between} points $P$ and $Q$ we
write $P-C-Q.$ Note that if $C-P-Q,$ there is a real number $r>0$ such that $%
Q=\xi _{C,r}\left( P\right) ,$ and if $P-C-Q,$ there is a real number $r>0$
such that $Q=\left( \varphi _{C}\circ \xi _{C,r}\right) \left( P\right) .$
Thus in either case, $Q=\delta _{C,r}\left( P\right) .$

Let $m$ be a line. The \textbf{reflection in line }$m$ is the transformation
$\sigma _{m}:\mathbb{R}^{2}\rightarrow \mathbb{R}^{2}$ with the following
properties:\ (1) \emph{If }$P$\emph{\ is on }$m,$\emph{\ then }$\sigma
_{m}\left( P\right) =P;$\emph{\ (2) if }$P$\emph{\ is off }$m$\emph{\ and }$%
P^{\prime }=\sigma _{m}\left( P\right) ,$\emph{\ then }$m$\emph{\ is the
perpendicular bisector of} $\overline{PP^{\prime }}.$ The line $m$ is the
\textbf{axis} of reflection (see Figure 3). A \textbf{stretch rotation} is a
non-trivial stretch about a point $C$ followed by a non-trivial rotation
about $C.$ A \textbf{stretch reflection} is a non-trivial stretch about a
point $C$ followed by a reflection in some line through $C.$

\medskip

\begin{center}
\includegraphics[
height=1.4529in,
width=2.3877in
]
{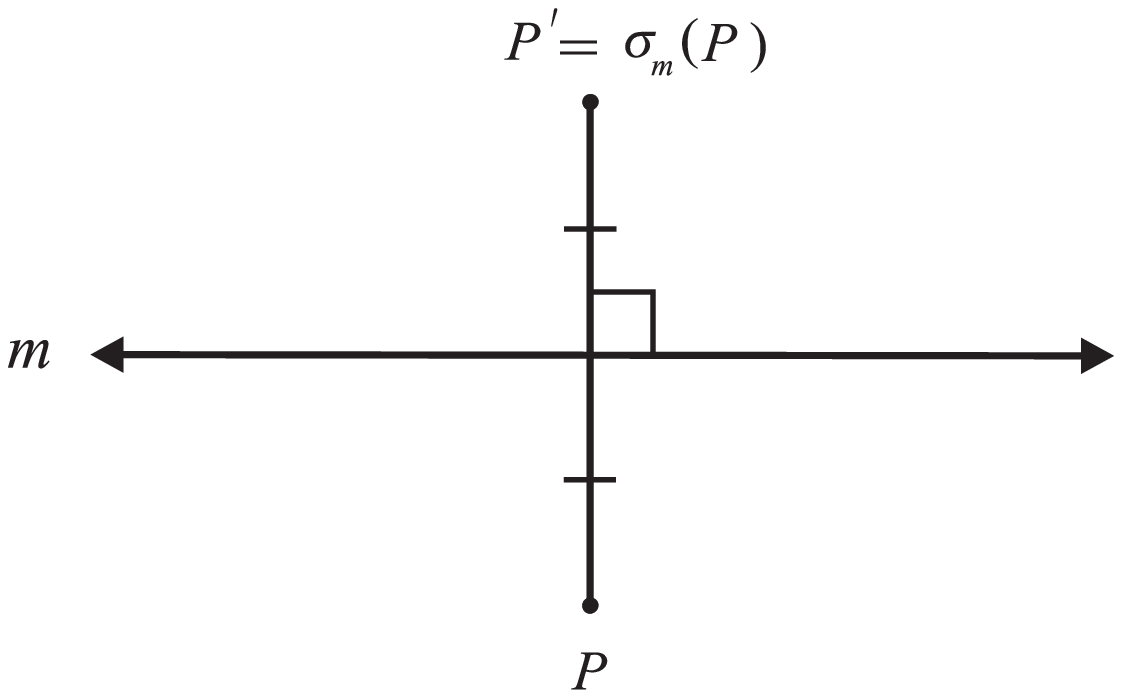}
\\\bigskip
\upshape{Figure 3. Line $m$ is the axis of
reflection.}
\end{center}
\medskip

Lines $l$ and $m$ are \textbf{parallel }if $l=m$ or $l\cap m=\varnothing $.
A \textbf{collineation} is a transformation $\phi :\mathbb{R}^{2}\rightarrow
\mathbb{R}^{2}$ that sends lines to lines; a collineation is a \textbf{%
dilatation }if it sends each line to a line parallel to it. In fact,
similarities are collineations and every dilatation that is a non-isometric
similarity is a dilation.\medskip

\section{Constructing Fixed Points}

The fixed point of a non-trivial dilation $\delta _{C,r},$ namely its center
$C,$ is easy to construct. Choose any point $A.$ If $A=\varphi _{C}\left(
A\right) ,$ then $A=C.$ So assume that $A\neq A^{\prime }=\varphi _{C}\left(
A\right) .$ If $r=1,$ then $\delta _{C,r}\left( A\right) =\varphi _{C}\ $and
$C$ is the midpoint of $\overline{AA^{\prime }}.$ If $r\neq 1,$ choose any
point $B$ off $\overleftrightarrow{AA}$. If $B=\delta _{C,r}\left( B\right) $%
, then $C=B$. So assume that $B\neq B^{\prime }=\delta _{C,r}\left( B\right)
.$ Since dilations are dilatations, $\overleftrightarrow{AB}\parallel
\overleftrightarrow{A^{\prime }B^{\prime }}$. On the other hand, $%
\overleftrightarrow{AA^{\prime }}\nparallel \overleftrightarrow{BB^{\prime }}%
;$ otherwise, $\square AA^{\prime }B^{\prime }B$ is a parallelogram and $%
A^{\prime }B^{\prime }=AB$ implies $\delta _{C,r}$ is an isometry, which is
a contradiction. Furthermore, $\delta _{C,r}(\overleftrightarrow{AA^{\prime }%
})=\overleftrightarrow{AA^{\prime }}$ since $A^{\prime }\in
\overleftrightarrow{AA^{\prime }}\,\cap \,\delta _{C,r}(\overleftrightarrow{%
AA^{\prime }})$, and similarly $\delta _{C,r}(\overleftrightarrow{BB^{\prime
}})=\overleftrightarrow{BB^{\prime }}$. Therefore $\delta _{C,r}(%
\overleftrightarrow{AA^{\prime }}\cap \overleftrightarrow{BB^{\prime }})=%
\overleftrightarrow{AA^{\prime }}\cap \overleftrightarrow{BB^{\prime }}$ and
$\overleftrightarrow{AA^{\prime }}\cap \overleftrightarrow{BB^{\prime }}=C$
(see Figure 4).

\medskip

\begin{center}
\includegraphics[
height=1.2321in,
width=2.6916in
]
{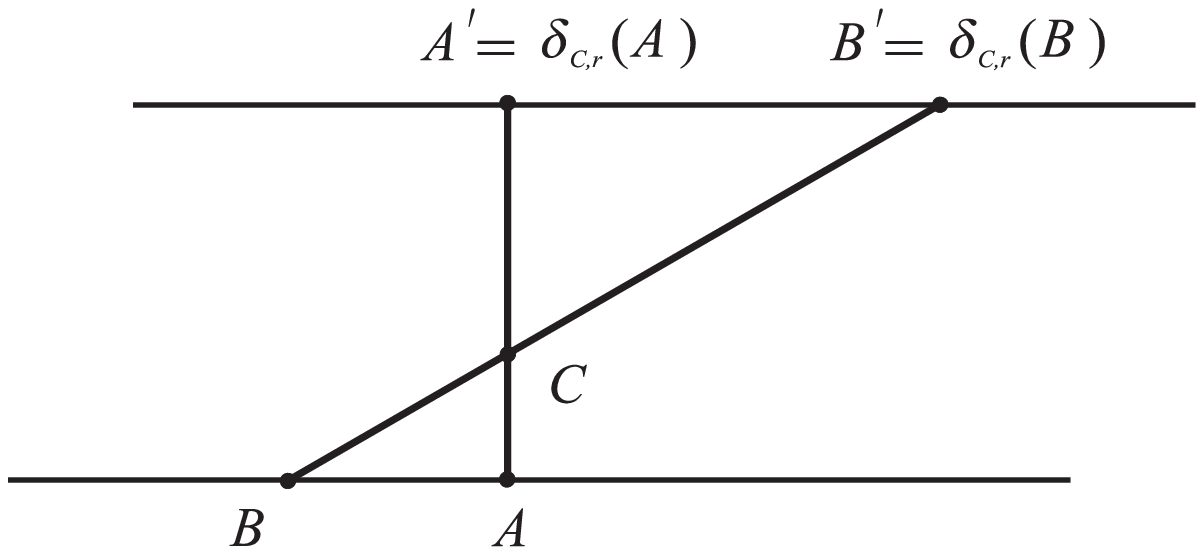}
\\\bigskip
\upshape{Figure
4. Constructing the fixed point $C$ of a non-isometric dilation.}
\end{center}
\medskip

The proof of our general fixed point construction algorithm rests heavily on
the following standard existence theorem, which we include here for
completeness:

\begin{theorem}
\label{thm4.7}Every non-isometric similarity has a fixed point.
\end{theorem}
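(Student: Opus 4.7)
The plan is to invoke a contraction-mapping argument, which requires only that $\alpha$ strictly shrinks distances and that $\mathbb{R}^2$ is complete. Let $r$ be the ratio of $\alpha$, so $r \neq 1$ by hypothesis. The first step is a reduction: if $r > 1$, then $\alpha^{-1}$ is a similarity of ratio $1/r < 1$, and any fixed point of $\alpha^{-1}$ is a fixed point of $\alpha$. So I may assume $0 < r < 1$.

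Next, I would pick any starting point $P_0 \in \mathbb{R}^2$ and define the orbit $P_{n+1} = \alpha(P_n)$. Because $\alpha$ has ratio $r$, an easy induction gives $P_n P_{n+1} = r^n \cdot P_0 P_1$. The key computation is then the triangle inequality estimate
\begin{equation*}
P_m P_n \;\leq\; \sum_{k=m}^{n-1} P_k P_{k+1} \;=\; P_0 P_1 \cdot \sum_{k=m}^{n-1} r^k \;\leq\; P_0 P_1 \cdot \frac{r^m}{1-r},
\end{equation*}
which shows $(P_n)$ is Cauchy. By completeness of $\mathbb{R}^2$, the sequence converges to some point $F$.

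Finally, I would show $F$ is fixed. Since $\alpha$ multiplies distances by $r$, it is Lipschitz with constant $r$, hence continuous. Therefore
\begin{equation*}
\alpha(F) = \alpha\bigl(\lim_{n\to\infty} P_n\bigr) = \lim_{n\to\infty} \alpha(P_n) = \lim_{n\to\infty} P_{n+1} = F,
\end{equation*}
so $F$ is the desired fixed point.

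The only step that might need a word of justification is the passage from "$\alpha$ has ratio $r$" to "$\alpha$ is continuous", but this is immediate from the definition: $P Q = d$ forces $\alpha(P)\alpha(Q) = rd$, so $\alpha$ is even Lipschitz. No use is made of the classification of similarities, which is important because the classification theorem itself is typically proved using the existence of a fixed point, so a direct argument like this avoids circularity.
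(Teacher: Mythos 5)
Your proof is correct, but it takes a genuinely different route from the paper's. You give the Banach contraction-mapping argument: reduce to ratio $r<1$ by passing to $\alpha^{-1}$ (valid, since similarities are bijective and $\alpha^{-1}$ is a similarity of ratio $1/r$ with the same fixed points), then iterate, verify the Cauchy estimate, and use continuity (Lipschitz with constant $r$) to see the limit is fixed. The estimate and the limit interchange are both sound, and your remark about avoiding the classification theorem is apt. The paper instead argues synthetically: it first disposes of the case where $\alpha$ is a dilatation (hence a dilation, whose center is found as $\overleftrightarrow{AA^{\prime}}\cap\overleftrightarrow{BB^{\prime}}$), and otherwise builds a configuration of lines $l,m$ and their images, locates $P=\overleftrightarrow{AB}\cap\overleftrightarrow{A^{\prime}B^{\prime}}$, and shows $P$ is fixed via similar triangles and a ratio computation. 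Your argument is shorter, works in any complete metric space, and yields uniqueness of the fixed point for free; what it does not give is any geometric locus for the fixed point, since the limit of the orbit is not accessible to straightedge-style construction. The paper's synthetic proof is longer and case-laden, but it exhibits the fixed point as an intersection of constructible lines, which is exactly the spirit of the construction algorithm that the rest of the paper develops. For the existence statement as literally posed, your proof is complete.
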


\begin{proof}
Let $\alpha $ be a non-isometric similarity. If $\alpha $ is dilatation, it
is a dilation and has a fixed point, namely its center constructed above. So
assume that $\alpha $ is not a dilatation. Let $l$ be a line that cuts its
image $l^{\prime }=\alpha (l)$ at the point $A=l\cap l^{\prime }$ and let $%
A^{\prime }=\alpha (A).$ If $A^{\prime }=A,$ then $\alpha $ has a fixed
point and we're done. So assume $A^{\prime }\neq A.$ Then $A^{\prime }$ is
on $l^{\prime }$ and off $l.$ Let $m$ be the line through $A^{\prime }$
parallel to $l.$ Since $A^{\prime }$ is off $l,$ the parallels $l$ and $m$
are distinct. Let $m^{\prime }=\alpha (m)$. We claim that lines $l^{\prime }$
and $m^{\prime }$ are distinct parallels. If not, either $l^{\prime
}\nparallel m^{\prime }$ or $l^{\prime }=m^{\prime }$, and in either case
there exists a point $Q^{\prime }\in l^{\prime }\cap m^{\prime }=\alpha
(l)\cap \alpha (m)$ and points $Q_{1}$ on $l$ and $Q_{2}$ on $m$ such that $%
Q^{\prime }=\alpha \left( Q_{1}\right) =\alpha \left( Q_{2}\right) .$ But $l$
and $m$ are distinct parallels, hence $Q_{1}\neq Q_{2}$ and $\alpha $ is not
injective, which is a contradiction. Therefore $l^{\prime }$ and $m^{\prime
} $ are distinct parallels, as claimed. Let $B=m\cap m^{\prime }$ and let $%
B^{\prime }=\alpha (B).$ If $B^{\prime }=B,$ then $\alpha $ has a fixed
point and we're done. So assume $B^{\prime }\neq B.$ Then $B^{\prime }$ is
on $m^{\prime }$ and $B^{\prime }\neq A^{\prime }$ since $A^{\prime }$ is on
$l^{\prime }.$ Note that $\overleftrightarrow{AA^{\prime }}=l^{\prime }\neq
m^{\prime }=\overleftrightarrow{BB^{\prime }}$ and $\overleftrightarrow{%
AA^{\prime }}\Vert \overleftrightarrow{BB^{\prime }}$ by the claim. Since $%
\alpha $ is not an isometry, $\square ABB^{\prime }A^{\prime }$ is not a
parallelogram and $\overleftrightarrow{AB}\nparallel \overleftrightarrow{%
A^{\prime }B^{\prime }}$. Let $P=\overleftrightarrow{AB}\cap
\overleftrightarrow{A^{\prime }B^{\prime }}$ and let $P^{\prime }=\alpha
\left( P\right) .$ If $P^{\prime }=P,$ the proof is complete. We consider
three cases:\smallskip\

\noindent \underline{Case 1}. $A-P-B.$ Then $A^{\prime }-P-B^{\prime }$
since $\overleftrightarrow{AA^{\prime }}\Vert \overleftrightarrow{BB^{\prime
}}$ (see Figure 5) and $A^{\prime }-P^{\prime }-B^{\prime }$ since $\alpha $
preserves betweenness. Thus $A^{\prime },$ $P,$ $P^{\prime },$ and $%
B^{\prime }$ are collinear. Since $\overleftrightarrow{AA^{\prime }}\Vert
\overleftrightarrow{BB^{\prime }}$, we have $\angle PAA^{\prime }\cong
\angle PBB^{\prime }$ and $\angle PA^{\prime }A\cong \angle PB^{\prime }B$
by the Alternate Interior Angles Theorem. Hence $\triangle APA^{\prime }\sim
\triangle BPB^{\prime }$ (AA) so that $AP/BP=A^{\prime }P/B^{\prime }P$ by
the Similar Triangles Theorem. Let $r$ be the ratio of the similarity $%
\alpha ;$ then $A^{\prime }P^{\prime }=rAP$ and $B^{\prime }P^{\prime }=rBP$
so that
\begin{equation*}
\frac{A^{\prime }P^{\prime }}{B^{\prime }P^{\prime }}=\frac{rAP}{rBP}=\frac{%
AP}{BP}=\frac{A^{\prime }P}{B^{\prime }P}.
\end{equation*}%
Since $A^{\prime }P^{\prime }=A^{\prime }B^{\prime }-B^{\prime }P^{\prime }$
and $A^{\prime }P=A^{\prime }B^{\prime }-B^{\prime }P$, direct substitution
gives
\begin{equation*}
\frac{A^{\prime }B^{\prime }-B^{\prime }P^{\prime }}{B^{\prime }P^{\prime }}=%
\frac{A^{\prime }B^{\prime }-B^{\prime }P}{B^{\prime }P}.
\end{equation*}%
Then $B^{\prime }P^{\prime }=B^{\prime }P$ by algebra, and $P=P^{\prime }$
as claimed.

\begin{center}
\includegraphics[
height=1.4538in,
width=2.7216in
]
{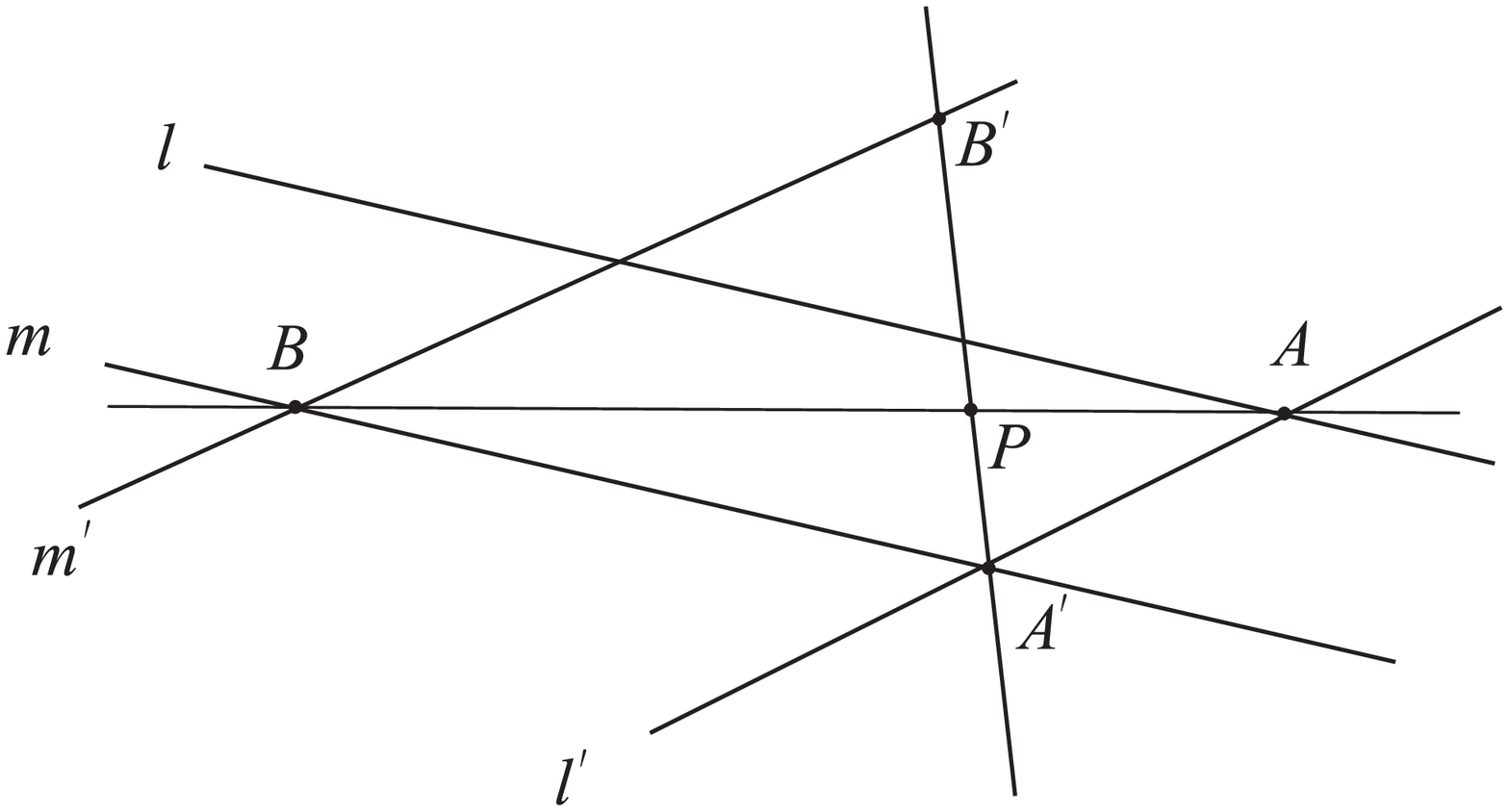}
\\\bigskip
\upshape{Figure 5.
The points $A^{\prime },$ $P,$ and $B^{\prime }$ are collinear.}
\end{center}
\medskip

\noindent \underline{Cases 2 and 3}. The proofs assuming $A-B-P$ and $P-A-B$
are similar and left to the reader. $\vspace{0.1in}$
\end{proof}

Theorem \ref{thm4.7} asserts that every non-isometric similarity $\alpha $
has a fixed point, so let's construct it. By the Classification Theorem for
Plane Similarities, $\alpha $ is a stretch, a stretch rotation, or a stretch
reflection \cite{Martin}, so the fixed point of $\alpha $ is always the
center of a stretch. The fixed point of a dilation was constructed above, so
assume that $\alpha $ is not a dilation.

We need the following technical fact:

\begin{proposition}
\label{non-sim-triangles}Let $\alpha $ be a non-isometric similarity that is
not a dilation. There exist non-degenerate triangles $\triangle PQR$ and $%
\triangle P^{\prime }Q^{\prime }R^{\prime }$ such that $P^{\prime }=\alpha
\left( P\right) ,$ $Q^{\prime }=\alpha \left( Q\right) ,$ $R^{\prime
}=\alpha \left( R\right) ,$ $\overleftrightarrow{PQ}\nparallel
\overleftrightarrow{P^{\prime }Q^{\prime }}$, and $\overleftrightarrow{QR}%
\nparallel \overleftrightarrow{Q^{\prime }R^{\prime }}.$
\end{proposition}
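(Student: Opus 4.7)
The strategy is to exploit the hypothesis that $\alpha$ is not a dilation. Because $\alpha$ is also non-isometric, the remark at the end of \S2 that every dilatation which is a non-isometric similarity is a dilation forces $\alpha$ to not be a dilatation. So some line $\ell$ has $\alpha(\ell) \nparallel \ell$. I would take $P$ and $Q$ to be any two distinct points on $\ell$; then $\overleftrightarrow{PQ} = \ell$ and $\overleftrightarrow{P^{\prime }Q^{\prime }} = \alpha(\ell)$ are automatically non-parallel, which handles the first condition.

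For the third vertex I would choose a line $m$ through $Q$ with $m \neq \ell$ and $\alpha(m) \nparallel m$, and take $R$ to be any point on $m$ other than $Q$. Since $m \cap \ell = \{Q\}$, the point $R$ lies off $\ell$, so $\triangle PQR$ is non-degenerate; similarities being bijective collineations, the image triangle $\triangle P^{\prime }Q^{\prime }R^{\prime }$ is non-degenerate as well. The equalities $\overleftrightarrow{QR} = m$ and $\overleftrightarrow{Q^{\prime }R^{\prime }} = \alpha(m)$ then supply the second non-parallelism condition.

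The heart of the proof is showing that such an $m$ exists. This reduces to the claim that only finitely many directions $d$ through $Q$ have the property that every line of direction $d$ is parallel to its $\alpha$-image. Invoking the Classification Theorem for Plane Similarities, $\alpha$ is either a stretch rotation or a stretch reflection. A stretch rotation by an angle $\theta \not\equiv 0 \pmod{180^{\circ }}$ induces on the set of line directions a non-trivial rotation that fixes no direction, while a stretch reflection induces a reflection fixing exactly the two directions parallel and perpendicular to its axis. Hence at most two directions through $Q$ are ``bad,'' and excluding those together with $\ell$ itself still leaves infinitely many admissible choices for $m$. I expect this counting of bad directions to be the only step that requires real thought; everything else is routine bookkeeping about collineations and betweenness.
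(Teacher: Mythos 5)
Your argument is correct, but it diverges from the paper's at the key step. Both proofs begin identically: since $\alpha $ is a non-isometric similarity that is not a dilation, it is not a dilatation, so some line $q$ satisfies $\alpha (q)\nparallel q$, and the first non-parallelism comes for free by putting $P$ and $Q$ on $q$. Where you part ways is in producing the second line. The paper simply takes the second line to be $q^{\prime }=\alpha (q)$ itself (so $R$ is chosen on $q^{\prime }$, $R\neq Q=q\cap q^{\prime }$): since $\alpha $ is a bijective collineation, $q\nparallel q^{\prime }$ immediately forces $\alpha (q)\nparallel \alpha (q^{\prime })$, i.e.\ $q^{\prime }\nparallel q^{\prime \prime }$, and no further input is needed. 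You instead invoke the Classification Theorem for Plane Similarities and count the ``bad'' directions through $Q$ --- none for a stretch rotation with $\theta \not\equiv 0\pmod{180^{\circ }}$, exactly two (parallel and perpendicular to the axis) for a stretch reflection --- to conclude that almost every line $m$ through $Q$ works. Your counting is accurate (and correctly excludes the dilation cases $\theta \equiv 0,180^{\circ }$), and it proves something strictly stronger, namely that the choice of the second side is essentially arbitrary; that extra flexibility could be pedagogically useful when actually carrying out the construction. The cost is a dependence on the Classification Theorem, which the paper's one-line trick avoids entirely: its proof of this proposition uses only injectivity and the collineation property, keeping the proposition independent of the classification machinery that the paper reserves for Lemma \ref{stretch-lemma}.
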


\begin{proof}
Since $\alpha $ is not a dilation, it is not a dilatation. Hence there
exists a line $q$ such that $q^{\prime }=\alpha (q)\nparallel q$, and it
follows that $q^{\prime \prime }=\alpha (q^{\prime })\nparallel \alpha
(q)=q^{\prime }$. Let $Q=q\cap q^{\prime }$. Choose a point $P\neq Q$ on $q$
and a point $R\neq Q$ on $q^{\prime }$. Then $P,Q$, and $R$ are distinct and
non-collinear, and so are $Q^{\prime }=\alpha (Q)=q^{\prime }\cap q^{\prime
\prime }$, $P^{\prime }=\alpha (P)\in q^{\prime }$, and $R^{\prime }=\alpha
(R)\in q^{\prime \prime }$. Hence $\overleftrightarrow{PQ}=q\nparallel
q^{\prime }=\overleftrightarrow{P^{\prime }Q^{\prime }}$ and $%
\overleftrightarrow{QR}=q^{\prime }\nparallel q^{\prime \prime }=%
\overleftrightarrow{Q^{\prime }R^{\prime }}.$\vspace{0.1in}
\end{proof}

Our fixed point construction algorithm now follows.

\begin{algorithm}
\label{construct-fixed-pt}Let $\alpha $ be a non-isometric similarity that
is not a dilation. To construct the fixed point of $\alpha $, choose
triangles $\triangle PQR$ and $\triangle P^{\prime }Q^{\prime }R^{\prime }$
that satisfy the conditions of Proposition \ref{non-sim-triangles} and
proceed as follows:

\begin{enumerate}
\item Construct line $n$ through $R$ parallel to $m=\overleftrightarrow{PQ}$.

\item Construct line $n^{\prime}$ through $R^{\prime}$ parallel to $%
m^{\prime }=\overleftrightarrow{P^{\prime}Q^{\prime}}$.

\item Let $D=m\cap m^{\prime}$ and $E=n\cap n^{\prime}$.

\item Construct line $a=\overleftrightarrow{DE}$ \textup{(see Figure 6)}.

\item Interchange $P$ and $R$ and repeat steps 1--4 to construct line $b$.

\item Then $C=a\cap b$ is the fixed point of $\alpha $.
\end{enumerate}
\end{algorithm}

\medskip

\begin{center}
\includegraphics[
height=3.1586in,
width=4.6469in
]
{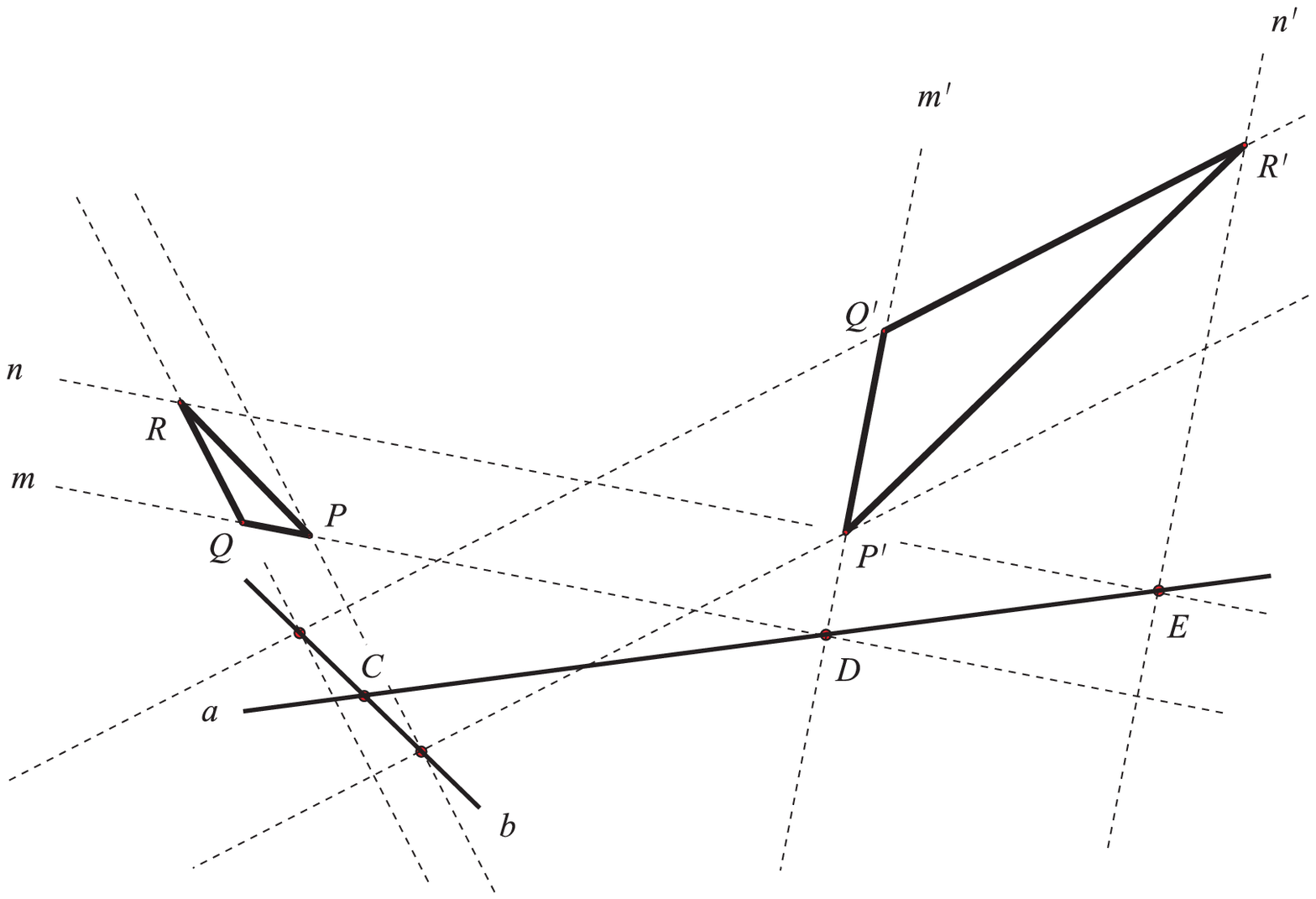}
\\\bigskip
\upshape{Figure 6. Constructing the
fixed point $C$ of a stretch rotation.}
\end{center}
\medskip

The proof of Algorithm \ref{construct-fixed-pt} is a consequence of the
following lemma:

\begin{lemma}
\label{stretch-lemma}Let $\alpha$ be a non-isometric similarity and let $C$
be the fixed point of $\alpha$ given by Theorem \ref{thm4.7}. Let $l$ and $m$
be distinct parallel lines off $C$, let $l^{\prime}=\alpha\left( l\right) $,
and let $m^{\prime}=\alpha\left( m\right) .$ If $l\cap l^{\prime}=D$ and $%
m\cap m^{\prime}=E,$ then $C$, $D$, and $E$ are collinear.
\end{lemma}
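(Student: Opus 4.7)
The plan is to exploit the linear structure of $\alpha$ about its fixed point $C$. Translating $C$ to the origin, $\alpha$ becomes a linear map $X \mapsto MX$, where by the Classification Theorem $M$ is $r$ times a rotation (for a stretch or stretch rotation) or $r$ times a reflection (for a stretch reflection), with $r > 0$ the similarity ratio and $r \neq 1$ since $\alpha$ is non-isometric.

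First I would identify $l'$ and $m'$ concretely. Set $D' := \alpha(D)$ and $E' := \alpha(E)$. From $D \in l$ we get $D' \in \alpha(l) = l'$, and $D \in l'$ by definition, so both $D$ and $D'$ lie on $l'$. Because $l$ is off $C$ we have $D \neq C$, and hence $D' \neq D$ since $C$ is the only fixed point of $\alpha$; thus $\overleftrightarrow{DD'} = l'$. The same reasoning gives $\overleftrightarrow{EE'} = m'$. Since $\alpha$ preserves angles, it preserves parallelism, so $l' \parallel m'$, and the position vectors relative to $C$
\[
D' - D = (M - I) D, \qquad E' - E = (M - I) E
\]
are parallel.

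The main step is to verify that $M - I$ is invertible, which is where the hypothesis of non-isometry enters crucially. For a stretch rotation with $M = r R_{\theta}$, the complex eigenvalues are $r e^{\pm i \theta}$ of modulus $r \neq 1$; for a stretch reflection $M$ has determinant $-r^{2}$ and trace $0$, so eigenvalues $\pm r$, neither equal to $1$ since $r > 0$ and $r \neq 1$; for a pure stretch $M = rI$ with eigenvalue $r \neq 1$. In every case $1$ is not an eigenvalue of $M$, so $\det(M - I) \neq 0$. Invertibility then reduces the parallelism above to $E = tD$ for some scalar $t$, which, restated around $C$, says $E - C$ is a scalar multiple of $D - C$; that is, $C$, $D$, $E$ are collinear.

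The only real obstacle is performing the invertibility check uniformly across the three similarity types. A purely synthetic alternative is available for the stretch and stretch rotation cases: $\triangle CDD' \sim \triangle CEE'$ by SAS (both have the same angle $\theta$ at $C$ and the same ratio $r$ of the adjacent sides from $C$), and the resulting equality of directed angles, combined with $l' \parallel m'$, forces $\overleftrightarrow{CD} \parallel \overleftrightarrow{CE}$, which then coincide since they share $C$. The stretch reflection case resists this direct synthetic comparison because $\angle X C \alpha(X)$ depends on the position of $X$ relative to the reflection axis, so the linear-algebra route is the cleanest unified argument.
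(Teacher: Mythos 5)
Your proof is correct, but it reaches the conclusion by a genuinely different route. The paper also starts from the Classification Theorem, writing $\alpha =\rho _{C,\Theta }\circ \xi _{C,r}$ or $\alpha =\sigma _{n}\circ \xi _{C,r}$ with $C$ on $n$, but then argues synthetically: since $l$ and $m$ are distinct parallels off $C$, there is a dilation $\delta _{C,s}$ with $\delta _{C,s}(l)=m$; this dilation commutes with every factor of $\alpha $ (in your coordinates it is $\pm sI$ about $C$, which commutes with everything), hence $\delta _{C,s}(l^{\prime })=m^{\prime }$, so $\delta _{C,s}(D)=\delta _{C,s}(l\cap l^{\prime })=m\cap m^{\prime }=E$, and $E$ lies on $\overleftrightarrow{CD}$ because a dilation moves points along lines through its center. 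Your argument instead identifies the direction of $l^{\prime }$ as $(M-I)D$ and of $m^{\prime }$ as $(M-I)E$ and extracts collinearity from the invertibility of $M-I$; that invertibility is exactly the statement that $C$ is the unique fixed point, so your version makes completely transparent where non-isometry enters, and it treats all three similarity types in one computation. What the paper's version buys is a coordinate-free argument pitched at its classroom audience, and the commuting dilation is precisely the synthetic device that rescues the stretch-reflection case where, as you correctly observe, the naive SAS comparison of $\triangle CDD^{\prime }$ with $\triangle CEE^{\prime }$ fails because $\angle DCD^{\prime }$ is not constant for a stretch reflection. One small point worth making explicit in your write-up: the uniqueness of the fixed point, which you invoke to get $D^{\prime }\neq D$, follows from $r\neq 1$, since two fixed points at distance $d$ would force $d=rd$.
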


\begin{proof}
Let $\alpha $ be a non-isometric similarity and let $C$ be the fixed point
of $\alpha $ given by Theorem \ref{thm4.7}. Then $\alpha $ is not a dilation
since $l^{\prime }=\alpha (l)\nparallel l$ by assumption. Hence by the
Classification Theorem for Similarities, $\alpha =\rho _{C,\Theta }\circ \xi
_{C,r}$ or $\alpha =\sigma _{n}\circ \xi _{C,r}$, where in the latter case, $%
C$ is on $n$. Since $l$ and $m$ are distinct parallel lines off $C$, there
exists a dilation $\delta _{C,s}$ such that $\delta _{C,s}\left( l\right) =m$%
. By definition, either $\delta _{C,s}=\xi _{C,s}$ or $\delta _{C,s}=\varphi
_{C}\circ \xi _{C,s}$. The reader can easily check that $\xi _{C,s}$
commutes with $\rho _{C,\Theta },$ $\sigma _{n},$ and $\xi _{C,r}$. Hence $%
\xi _{C,s}$ commutes with $\alpha $. Furthermore, $\varphi _{C}$ commutes
with $\rho _{C,\Theta }$, $\sigma _{n}$, and $\xi _{C,r}$. Hence $\varphi
_{C}$ commutes with $\alpha $. Consequently, $\delta _{C,s}$ commutes with $%
\alpha $ and $\delta _{C,s}\left( l^{\prime }\right) =\delta _{C,s}\left(
\alpha \left( l\right) \right) =\alpha \left( \delta _{C,s}\left( l\right)
\right) =\alpha \left( m\right) =m^{\prime }.$ Therefore $\delta
_{C,s}\left( D\right) =\delta _{C,s}\left( l\cap l^{\prime }\right) =m\cap
m^{\prime }=E,$ and it follows that $C$, $D$, and $E$ are collinear.\vspace{%
0.1in}
\end{proof}

\begin{theorem}
Let $\alpha$ be a non-isometric similarity and let $C$ be the point given by
Algorithm \ref{construct-fixed-pt}. Then $C$ is the fixed point of $\alpha$.
\end{theorem}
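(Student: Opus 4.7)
The plan is to identify the point $C=a\cap b$ produced by the algorithm with the (a priori existing) fixed point $C_{0}$ of $\alpha$ furnished by Theorem \ref{thm4.7}. The strategy is to invoke Lemma \ref{stretch-lemma} twice to show that $C_{0}$ lies on each of the auxiliary lines $a$ and $b$, and then to verify that $a\neq b$, so that $a\cap b=\{C_{0}\}=\{C\}$.

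First, I would confirm the construction is non-degenerate. By Proposition \ref{non-sim-triangles}, $m=\overleftrightarrow{PQ}\nparallel m^{\prime}=\alpha(m)$, so $D=m\cap m^{\prime}$ is well-defined; and since $\triangle PQR$ is non-degenerate, $R$ is off $m$, so the parallel $n$ to $m$ through $R$ is distinct from $m$. A brief argument following the commuting-dilation technique from the proof of Lemma \ref{stretch-lemma} shows $n\nparallel n^{\prime}$ as well (or else $n=n^{\prime}$, an edge case in which $C_{0}\in n$), so $E=n\cap n^{\prime}$ is well-defined. The analogous verification applies after interchanging $P$ and $R$, giving well-defined intersections for the construction of $b$.

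Next, I apply Lemma \ref{stretch-lemma} with the distinct parallel pair $(m,n)$: in the generic case both are off $C_{0}$, so the lemma yields that $C_{0}$, $D$, and $E$ are collinear, placing $C_{0}$ on $a=\overleftrightarrow{DE}$. The same argument after interchanging $P$ and $R$ shows $C_{0}\in b$. Hence $C_{0}\in a\cap b$.

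The main obstacle is then to rule out $a=b$ and handle the edge cases where the lemma's hypothesis ``off $C$'' fails. For the latter, if $C_{0}\in m$ then $\alpha(m)=m^{\prime}$ also passes through $C_{0}$, forcing $D=m\cap m^{\prime}=C_{0}$ (since $m\neq m^{\prime}$), so $a=\overleftrightarrow{DE}$ trivially contains $C_{0}$; and $C_{0}$ cannot lie on both $m$ and $n$ because they are distinct parallels. For the former, I would use the Classification Theorem for Plane Similarities to write $\alpha=\rho_{C_{0},\Theta}\circ\xi_{C_{0},r}$ or $\alpha=\sigma_{n}\circ\xi_{C_{0},r}$. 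In each case, an equivariance argument using the commuting dilations centered at $C_{0}$ shows that the locus $\{l\cap\alpha(l)\}$ (as $l$ varies within a fixed parallel class off $C_{0}$) is a line through $C_{0}$ whose direction depends injectively on that parallel class. Since $\overleftrightarrow{PQ}$ and $\overleftrightarrow{QR}$ are non-parallel by the non-degeneracy of $\triangle PQR$, the two loci $a$ and $b$ are distinct, and $C=a\cap b=C_{0}$ as desired.
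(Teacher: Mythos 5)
Your proof is correct and follows essentially the same route as the paper: apply Lemma \ref{stretch-lemma} to the parallel pair $(m,n)$ and to its counterpart after interchanging $P$ and $R$, concluding that the fixed point of Theorem \ref{thm4.7} lies on both $a$ and $b$. You additionally address two points the paper passes over silently --- the degenerate case where the fixed point lies on one of the constructed lines, and the verification that $a\neq b$ so that $a\cap b$ is a single point --- and your treatments of these are sound (though the injectivity of the direction of the locus on the parallel class is asserted rather than proved, it is true and routine to check).
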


\begin{proof}
Referring to the notation in Algorithm 1, note that $\alpha \left( m\right)
=m^{\prime }$ since $\alpha \left( P\right) =P^{\prime }$ and $\alpha \left(
Q\right) =Q^{\prime }.$ Thus $\alpha \left( n\right) =n^{\prime }$ since $%
\alpha \left( R\right) =R^{\prime }$ and $\alpha $ maps parallel lines to
parallel lines. By Lemma \ref{stretch-lemma}, $C,$ $D,$ and $E$ are
collinear. A similar argument shows that $C,$ $F,$ and $G$ are also
collinear. Therefore $C=\overleftrightarrow{DE}\cap \overleftrightarrow{FG}.$%
\vspace{0.1in}
\end{proof}

\begin{example}
\textup{The two triangles in Figure 7 below are related by a stretch
reflection} $\alpha $\textup{\ whose fixed point $C$ was given by Algorithm %
\ref{construct-fixed-pt}. To construct the axis of reflection, let }$P$%
\textup{\ be a vertex and let }$P^{\prime }=\alpha \left( P\right) .$\textup{%
\ Then the axis of reflection is the bisector of} $\angle PCP^{\prime }.$%
\bigskip
\end{example}

\medskip

\begin{center}
\includegraphics[
height=2.8167in,
width=4.8957in
]
{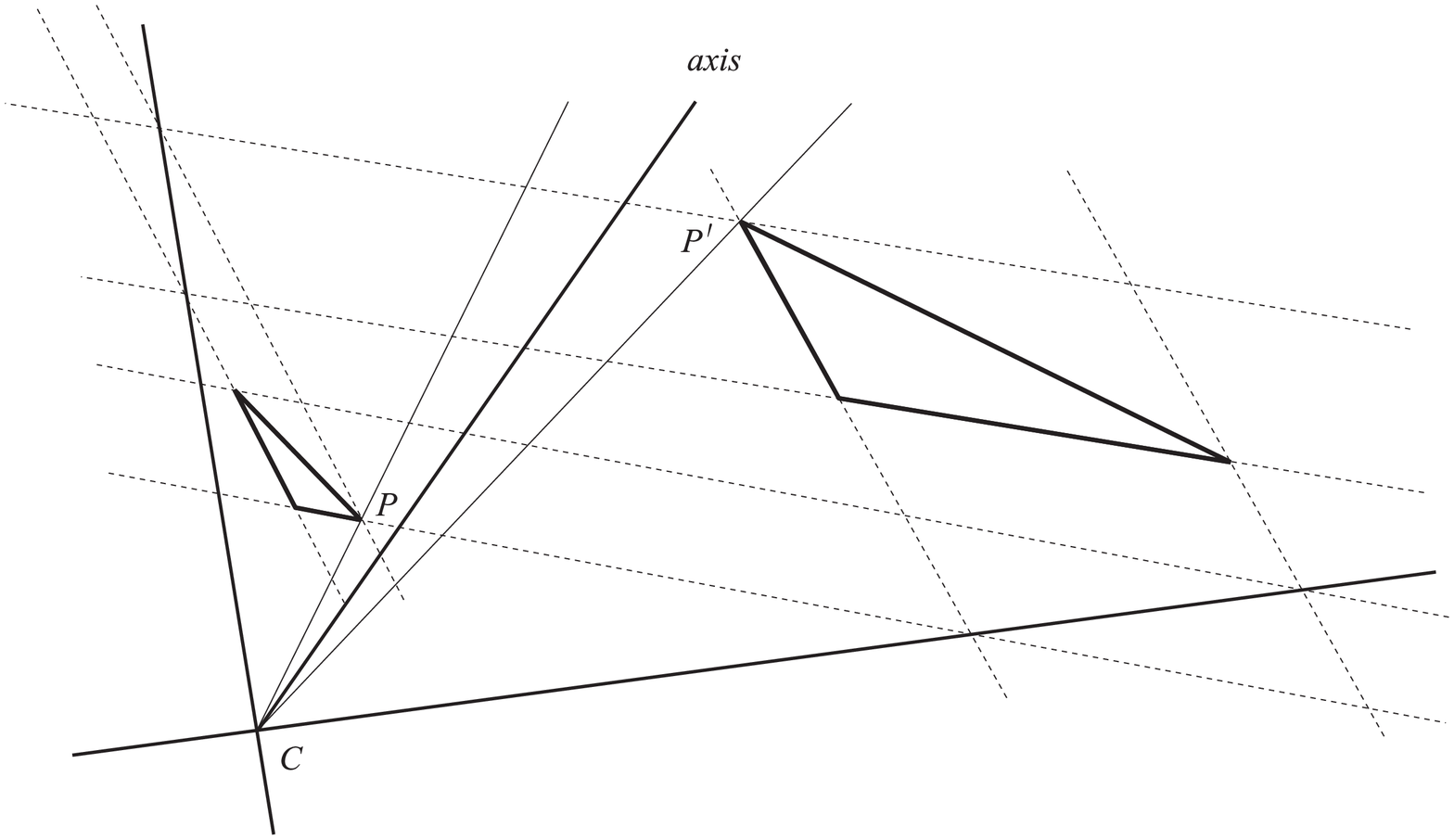}
\\\bigskip
\upshape{Figure 7. Constructing the
fixed point $C$ and axis of a stretch reflection.}
\end{center}
\medskip

\section{Conclusion}

Given a non-isometric plane similarity $\alpha$, we have presented an
algorithm for constructing the fixed point of $\alpha.$ Our algorithm can be
easily applied using a reflecting instrument such as MIRA. Since such
instruments are readily available in a typical high school classroom, our
algorithm gives students of plane geometry direct hands-on access to the
fixed point of a non-isometric similarity. Hands-on activities such as this
create the potential for a deeper and more complete understanding of plane
similarities and their properties.\bigskip

\end{document}